\numberwithin{equation}{section}
\newtheorem{thm}{Theorem}[section]
\newtheorem{lemma}{Lemma}[section]
\newtheorem{remark}{Remark}[section]
\newtheorem{defn}{Definition}[section]
\newcommand{\ds}{\displaystyle}
\def\mathrm{\mbox}
\numberwithin{remark}{section}
\begin{document}
\title{{\Large \bf Blowup time and blowup rate for a pseudo-parabolic equation with singular potential}\thanks{This work was supported by the National Natural Science Foundation of China (12171339).}}
\author{Xiang-kun Shao$^a$, Nan-jing Huang$^a $\footnote{Corresponding author.  E-mail addresses: nanjinghuang@hotmail.com; njhuang@scu.edu.cn} and Donal O'Regan$^b$\\
{\small\it a. Department of Mathematics, Sichuan University, Chengdu, Sichuan 610064, P.R. China}\\
{\small\it b. School of Mathematical and  Statistical Sciences,  University of Galway, Ireland}}
\date{}
\maketitle \vspace*{-11mm}
\begin{center}
\begin{minipage}{6in}
\noindent{\bf Abstract.} This paper provides the upper and lower bounds of blowup time and blowup rate as well as the exponential growth estimate of blowup solutions for a pseudo-parabolic equation with singular potential. 
These results complement the ones obtained in the previous literature.
\\ \ \\
{\bf Keywords:}~Pseudo-parabolic~equation;~Blowup~time;~Blowup~rate;~Growth~estimate.
\\
{\bf 2020 Mathematics Subject Classification}: 35K20; 35B44.
\end{minipage}
\end{center}

\section{Introduction}
\quad\quad In this paper, we focus on the study of the following IBVP of pseudo-parabolic equation with singular potential:
\begin{align}\label{1.1}
 \left\{\begin{array}{ll}
     \ds \frac{u_{t}}{|x|^s}-\Delta u-\Delta u_t=|u|^{p-2}u,\quad &x\in\Omega,\ t>0,\\
     \ds u(x,t)=0,\quad &x\in\partial\Omega,\ t>0,\\
     \ds u(x,0)=u_0(x),\quad &x\in\Omega,
    \end{array}\right.
\end{align}
where $\Omega\subset\mathbb{R}^n(n>2)$ is a bounded domain with smooth boundary $\partial\Omega$, $s\in[0,2]$, $p\in(2,{2n}/(n-2))$ and $u_0\in H_0^1(\Omega)$.

In recent years, the semilinear pseudo-parabolic equation
\begin{equation}\label{1.2}
u_t-\Delta u-k\Delta u_t=f(u),
\end{equation}
being used to describe the deformation of a thermoelastic body, has attracted a lot of attention from scholars (\cite{MR2523294,MR3987484,MR3372307,MR3745341,MR3045640,MR2981259}). For instance, Cao et al. \cite{MR2523294} investigated the existence, uniqueness and large time behavior of solutions for the Cauchy problem of \eqref{1.2} when $k>0$ and $f(u)=u^q$ with $q>0$, and they determined the critical global existence exponent and critical Fujita exponent; Yang et al. \cite{MR2981259} further obtained the second critical exponent for the Cauchy problem of \eqref{1.2} when $k>0$ and $f(u)=u^q$ with $q>1$; Xu and Su \cite{MR3045640} provided the existence of global and blowup solutions for the IBVP of \eqref{1.2} when $k=1$ and $f(u)=u^q$ with $1<q<\infty$ if $n=1,2$ as well as $1<q\leq(n+2)/(n-2)$ if $n>2$; Luo \cite{MR3372307} investigated the finite time blowup and bounds of blowup time for the IBVP of \eqref{1.2} when $k=1$ and $f(u)=u^q$ with $q>1$; Later on, Xu and Zhou \cite{MR3745341} proved a new blowup condition and a new upper bound of blowup time for the IBVP of \eqref{1.2} when $k=1$ and $f(u)=|u|^{q-1}u$ with $q>1$; When $k=1$ and $f$ satisfies some general assumptions, Han \cite{MR3987484} established the existence of blowup solutions of the IBVP of \eqref{1.2} and estimated the upper bound of blowup time.

In fluid mechanics, it is noticed that $|x|^{-s}$ may appear in non-Newton filtration equation describing the motion of a non-Newton fluid in a rigid porous medium under certain assumptions, representing a special medium void (see \cite{MR2036070,MR3162384}). Considering the porous media in the thermoelastic model \eqref{1.2} with $k=1$ and $f(u)=|u|^{p-2}u$, Lian et al. \cite{MR4104462} recently proved the existence and asymptotic behavior of global solutions for \eqref{1.1} at low and critical initial energies. Moreover, they showed the existence of blowup solutions with three initial energy levels and derived the upper bound of blowup time.

It is worth noting that blowup time and blowup rate are indispensable aspects in blowup theory and have always been of interest to researchers (see \cite{MR4728766,MR4705537}). In view of the results obtained in \cite{MR4104462}, we find that the blowup rate, the growth estimate and the lower bound of blowup time of blowup solutions to \eqref{1.1} have not been studied.  Therefore, the purpose of the present paper is to complement the blowup results in \cite{MR4104462}. Firstly, we provide a different method from the concavity method used in \cite{MR4104462} to prove the existence of blowup solutions at low initial energy. Based on it, we determine the upper bound of blowup rate and derive the exponential growth estimate of blowup solutions. Also, we present a new upper bound for blowup time, which is better than the one in \cite{MR4104462} in some specific circumstances. Finally, we estimate the lower bounds of blowup time and blowup rate when a blowup occurred.

The rest of this paper is organized as follows: Section 2 contains some symbols and lemmas. Section 3 contains the statement of main results and their proofs.

\section{Preliminaries}
\quad\quad Throughout this paper, we denote the $L^2(\Omega)$-inner product by $(\cdot,\cdot)$ and the $L^q(\Omega)$-norm ($1\leq q\leq\infty$) by $\|\cdot\|_q$.
For $\varphi\in H_0^1(\Omega)$, we define the functionals
\begin{equation}\label{JI}
J(\varphi):=\frac{1}{2}\|\nabla \varphi\|_2^2-\frac{1}{p}\|\varphi\|_p^p,\quad
I(\varphi):=\|\nabla \varphi\|_2^2-\|\varphi\|_p^p,
\end{equation}
and the mountain pass level
$
d:=\inf_{\varphi\in\mathcal{N}} J(\varphi),
$
where $\mathcal{N}=\{\varphi\in H_0^1(\Omega)\setminus\{0\}\mid I(\varphi)=0\}$ denotes the Nehari manifold. From \cite[Lemma 2.2]{MR4104462}, we have
\begin{equation}\label{djingquezhi}
d=\frac{p-2}{2p}C_*^{-\frac{2p}{p-2}},
\end{equation}
where $C_*>0$ is the best constant satisfying
\begin{equation}\label{C*}
\|\varphi\|_p\leq C_*\|\nabla\varphi\|_2,\quad \forall\varphi\in H_0^1(\Omega).
\end{equation}
By \cite[Remark 1]{MR4104462}, we know that there exists a constant $C_{**}>0$ such that
\begin{equation}\label{C**}
\int_\Omega\frac{\varphi^2}{|x|^s}dx\leq C_{**}\|\nabla\varphi\|_2^2,\quad\forall\varphi\in H_0^1(\Omega).
\end{equation}

The definition of weak solutions is as follows.
\begin{defn}\label{weaksolution}
{\bf (Weak solution)} A function $u\in L^\infty\left(0,T;H_0^1(\Omega)\right)$ with $u_t\in L^2(0,T;H_0^1(\Omega))$ is called a weak solution of \eqref{1.1} over $\Omega\times[0,T)$, if $u(x,0)=u_0(x)\in H_0^1(\Omega)$ and the equality
\begin{equation*}
(|x|^{-s}u_t,\phi)+(\nabla u,\nabla \phi)+(\nabla u_t,\nabla \phi)=(|u|^{p-2}u,\phi)
\end{equation*}
holds for a.e. $t\in(0,T)$ and any $\phi\in H_0^1(\Omega)$.
\end{defn}

Let $u=u(t)$, $t\in[0,T)$ be the weak solution of \eqref{1.1} where $T$ denotes the maximum existence time in this and next sections. Lemma \ref{le1} below is derived directly by equations $(2.2)$, $(4.12)$ and $(4.16)$ of \cite{MR4104462}.

\begin{lemma}\label{le1}
For $t\in[0,T)$, we have that
\begin{equation*}
\begin{split}
&J(u)+\int_0^t\|\nabla u_\tau\|_2^2d\tau+\int_0^t\int_\Omega\frac{u_\tau^2}{|x|^s}dxd\tau=J(u_0),\\
&\frac{1}{2}\frac{d}{dt}\left(\int_\Omega\frac{u^2}{|x|^s}dx+\|\nabla u\|_2^2\right)=-I(u).
\end{split}\end{equation*}
\end{lemma}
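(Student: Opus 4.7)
The plan is to derive both identities by choosing appropriate test functions in the weak formulation of Definition \ref{weaksolution}. For the energy identity, I would take $\phi = u_t$ (which lies in $H_0^1(\Omega)$ for a.e.~$t$ by the regularity in the definition), yielding
\begin{equation*}
\int_\Omega \frac{u_t^2}{|x|^s}\,dx + (\nabla u,\nabla u_t) + \|\nabla u_t\|_2^2 = (|u|^{p-2}u, u_t).
\end{equation*}
The middle term is $\tfrac{1}{2}\tfrac{d}{dt}\|\nabla u\|_2^2$, and the right-hand side is $\tfrac{1}{p}\tfrac{d}{dt}\|u\|_p^p$ (which requires only that $t\mapsto \|u(t)\|_p^p$ is absolutely continuous, a consequence of the embedding $H_0^1(\Omega)\hookrightarrow L^p(\Omega)$ for $p<2n/(n-2)$ together with $u_t\in L^2(0,T;H_0^1(\Omega))$). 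Hence $\tfrac{d}{dt}J(u) = -\|\nabla u_t\|_2^2 - \int_\Omega |x|^{-s} u_t^2\,dx$, and integrating from $0$ to $t$ gives the first identity.

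For the second identity, I would instead choose $\phi = u$ in the weak formulation, obtaining
\begin{equation*}
\int_\Omega \frac{u u_t}{|x|^s}\,dx + \|\nabla u\|_2^2 + (\nabla u_t,\nabla u) = \|u\|_p^p.
\end{equation*}
The first term equals $\tfrac{1}{2}\tfrac{d}{dt}\int_\Omega |x|^{-s} u^2\,dx$ and the third term equals $\tfrac{1}{2}\tfrac{d}{dt}\|\nabla u\|_2^2$. Rearranging produces exactly $\tfrac{1}{2}\tfrac{d}{dt}(\int_\Omega |x|^{-s} u^2\,dx + \|\nabla u\|_2^2) = \|u\|_p^p - \|\nabla u\|_2^2 = -I(u)$, which is the second claim.

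The only nontrivial point is rigorously justifying the time-derivative manipulations (turning products like $u u_t$ and $|u|^{p-2} u \cdot u_t$ into total derivatives, and differentiating the $|x|^{-s}$-weighted term). The weight $|x|^{-s}$ is controlled by the Hardy-type inequality \eqref{C**}, so $u(t)/|x|^{s/2}$ lies in $L^2(\Omega)$ and $u_t(t)/|x|^{s/2}$ can be paired with it in the usual way; the chain rule for Bochner-integrable maps then applies. Since both facts are essentially restatements of relations already appearing in \cite{MR4104462} (equations (2.2), (4.12), (4.16)), the proof would simply point to these derivations rather than re-verify the functional-analytic details, and the lemma is then immediate.
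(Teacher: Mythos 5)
Your proposal is correct and matches the paper's treatment: the paper offers no independent proof of Lemma \ref{le1}, simply quoting equations (2.2), (4.12) and (4.16) of \cite{MR4104462}, and your test-function derivation (taking $\phi=u_t$ for the energy identity and $\phi=u$ for the second identity) is exactly the standard argument underlying those cited relations.
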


The next lemma is crucial in studying the upper bound of blowup rate.

\begin{lemma}\label{le2}
If $I(u_0)<0$ and $0\leq J(u_0)<d$, then there exists a constant $\theta_2>\theta_1:=C_*^{-\frac{2}{p-2}}$ such that
\begin{equation}\label{9}
\|u\|_{p}\geq\theta_2,\quad \|\nabla u\|_2\geq\frac{\theta_2}{C_*},\quad t\in[0,T),
\end{equation}
where $C_*>0$ is defined in \eqref{C*}. Additionally, we have that
\begin{equation}\label{theta0}
\frac{\theta_2}{\theta_1}\geq\theta_0:=\left(\frac{p}{2}-pC_*^{\frac{2p}{p-2}}J(u_0)\right)^{\frac{1}{p-2}}>1.
\end{equation}
\end{lemma}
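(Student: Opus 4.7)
The plan is to first establish that the region $\{I<0,\ J<d\}$ is invariant along the flow, and then extract quantitative lower bounds by combining the sharp Sobolev inequality \eqref{C*} with the energy identity in Lemma \ref{le1}.

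First I would prove the invariance: $I(u(t))<0$ for all $t\in[0,T)$. Suppose for contradiction that there exists a first time $t_0\in(0,T)$ at which $I(u(t_0))=0$. For $t<t_0$ the inequality $I(u(t))<0$ combined with \eqref{C*} gives $\|\nabla u(t)\|_2^2<\|u(t)\|_p^p\le C_*^p\|\nabla u(t)\|_2^p$, hence $\|\nabla u(t)\|_2\ge C_*^{-p/(p-2)}>0$. By the continuity of $t\mapsto\|\nabla u(t)\|_2$ (from $u\in C([0,T);H_0^1)$), $u(t_0)\ne0$, so $u(t_0)\in\mathcal N$ and therefore $J(u(t_0))\ge d$. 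But Lemma \ref{le1} and $\|\nabla u_\tau\|_2^2\ge0$, $\int_\Omega u_\tau^2/|x|^s\,dx\ge0$ give $J(u(t_0))\le J(u_0)<d$, a contradiction.

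Next I would extract the sharp lower bound on $\|u\|_p$. Using $\|u\|_p\le C_*\|\nabla u\|_2$ one has
\begin{equation*}
J(u)\ \ge\ \frac{1}{2C_*^2}\|u\|_p^2-\frac1p\|u\|_p^p\ =:\ h(\|u\|_p).
\end{equation*}
An elementary computation shows that $h$ attains its maximum at $Z=\theta_1=C_*^{-2/(p-2)}$ with $h(\theta_1)=d$ (cf.\ \eqref{djingquezhi}), and $h$ is strictly decreasing on $[\theta_1,\infty)$ with $h(Z)\to-\infty$. From the previous paragraph and $I(u)<0$ the same argument as above yields $\|u\|_p>\theta_1$. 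Because $J(u_0)<d=h(\theta_1)$, there is a unique $\theta_2>\theta_1$ with $h(\theta_2)=J(u_0)$; since $J(u(t))\le J(u_0)$ and $h$ is decreasing on $[\theta_1,\infty)$, the inequality $h(\|u(t)\|_p)\le J(u_0)=h(\theta_2)$ forces $\|u(t)\|_p\ge\theta_2$. The bound $\|\nabla u\|_2\ge\theta_2/C_*$ in \eqref{9} is then immediate from \eqref{C*}.

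Finally I would verify the quantitative comparison \eqref{theta0}. Writing $\mu:=\theta_2/\theta_1$ and inserting $\theta_2=\mu\,C_*^{-2/(p-2)}$ into $h(\theta_2)=J(u_0)$, the common factor $C_*^{-2p/(p-2)}$ simplifies the defining equation to
\begin{equation*}
\frac{\mu^2}{2}-\frac{\mu^p}{p}\ =\ C_*^{\frac{2p}{p-2}}J(u_0),
\end{equation*}
equivalently $\mu^{p-2}=p/2-pC_*^{2p/(p-2)}J(u_0)/\mu^2$. Since $\mu>1$ and $J(u_0)\ge0$, dropping the factor $1/\mu^2$ only decreases the right-hand side, giving $\mu^{p-2}\ge p/2-pC_*^{2p/(p-2)}J(u_0)=\theta_0^{p-2}$, i.e.\ $\theta_2/\theta_1\ge\theta_0$. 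The assumption $J(u_0)<d$ translates via \eqref{djingquezhi} into $pC_*^{2p/(p-2)}J(u_0)<(p-2)/2$, which is exactly the statement $\theta_0>1$.

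The only step that requires any real care is the invariance argument in the first paragraph; everything else is an algebraic consequence of the shape of the scalar function $h$ and the explicit value \eqref{djingquezhi} of $d$.
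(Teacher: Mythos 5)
Your proof is correct, and it differs from the paper's in one structural step: how the lower bound is propagated in time. The paper never proves sign-invariance of $I$; it shows $\|u_0\|_p\geq\theta_2$ and then argues by contradiction directly on the level sets of $\|u(t)\|_p$, using continuity of $t\mapsto\|u(t)\|_p$ to pick an intermediate time $t_0$ with $\theta_1<\|u(t_0)\|_p<\theta_2$ and contradicting $J(u(t_0))\leq J(u_0)=h(\theta_2)$ via the monotonicity of $h$. You instead first establish the potential-well invariance $I(u(t))<0$ by a first-time argument, invoking the variational characterization $J\geq d$ on $\mathcal{N}$ together with the energy identity, and only then use $I(u(t))<0\Rightarrow\|u(t)\|_p>\theta_1$ plus the monotonicity of $h$ on $[\theta_1,\infty)$ to force $\|u(t)\|_p\geq\theta_2$ at every time; the final algebraic step for $\theta_2/\theta_1\geq\theta_0>1$ is identical in substance to the paper's. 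Both routes tacitly need $u\in C([0,T);H_0^1(\Omega))$ (available since $u_t\in L^2(0,T;H_0^1(\Omega))$), so neither is more demanding on the solution; your version has the extra payoff of recording the invariance $I(u(t))<0$, which is exactly the sign information used later in Theorem \ref{baopo} via $\mathcal{H}'(t)=-I(u)>0$, while the paper's version is slightly more self-contained in that it only uses the explicit value \eqref{djingquezhi} of $d$ and never the infimum property over $\mathcal{N}$. Two cosmetic points: the phrase ``$h(Z)\to-\infty$'' should read $h(\theta)\to-\infty$ as $\theta\to\infty$, and in the second paragraph the inequality chain giving $\|u\|_p>\theta_1$ should be spelled out as $C_*^{-2}\|u\|_p^2\leq\|\nabla u\|_2^2<\|u\|_p^p$, which is what ``the same argument'' amounts to.
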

\begin{proof}
Noting that $I(u_0)<0$ and \eqref{C*}, we obtain $\|u_0\|_p^p>\|\nabla u_0\|_2^2\geq C_*^{-2}\|u_0\|_p^2$. Thus, $\|u_0\|_p>\theta_1:=C_*^{-\frac{2}{p-2}}$.
From \eqref{JI} and \eqref{C*}, one has
\begin{equation}\label{le21}
J(u)\geq \frac{1}{2C_*^2}\|u\|_p^2-\frac{1}{p}\|u\|_p^p.
\end{equation}
Define $h(\theta)=\frac{1}{2C_*^2}\theta^2-\frac{1}{p}\theta^p$ for all $\theta\geq0$. Then $h(\theta)$ is decreasing on $[\theta_1,\infty)$, increasing on $[0,\theta_1]$, and $h(\theta_1)=\frac{p-2}{2p}C_*^{-\frac{2p}{p-2}}=d$.

Owing to $J(u_0)<d$ and \eqref{le21}, there exists a constant $\theta_2>\theta_1$ such that $h(\theta_2)=J(u_0)\geq h(\|u_0\|_p)$, which, together with $\|u_0\|_p>\theta_1$, yields $\|u_0\|_p\geq\theta_2$. Next we prove $\|u\|_p\geq\theta_2$ for $t\in(0,T)$. Indeed, if not, then by $\theta_1<\theta_2$, one can choose a $t_0\in(0,T)$ such that $\theta_1<\|u(t_0)\|_p<\theta_2$. According to \eqref{le21}, we obtain $J(u_0)=h(\theta_2)<h(\|u(t_0)\|_p)\leq J(u(t_0))$, which contradicts that $J(u_0)\geq J(u(t_0))$ (see Lemma \ref{le1}). In addition, we deduce from \eqref{C*} that $C_*\|\nabla u\|_2\geq\|u\|_p\geq\theta_2$. Therefore, \eqref{9} holds.

Next, we show that \eqref{theta0} is true. Let $\theta_3=\theta_2/\theta_1>1$. Then we infer from $h(\theta_2)=J(u_0)$ and $\theta_1=C_*^{-\frac{2}{p-2}}$ that
\begin{equation*}
J(u_0)=h(\theta_1\theta_3)=\theta_1^2\theta_3^2\left(\frac{1}{2C_*^2}-\frac{\theta_1^{p-2}\theta_3^{p-2}}{p}\right)
=\theta_3^2C_*^{-\frac{2p}{p-2}}\left(\frac{1}{2}-\frac{\theta_3^{p-2}}{p}\right).
\end{equation*}
Thanks to $J(u_0)\geq0$ and $\theta_3>1$, we derive
$
\frac{1}{2}-\frac{\theta_3^{p-2}}{p}=\frac{C_*^{\frac{2p}{p-2}}J(u_0)}{\theta_3^2}\leq C_*^\frac{2p}{p-2}J(u_0).
$
Since $J(u_0)<d=\frac{p-2}{2p}C_*^{-\frac{2p}{p-2}}$, one has
\begin{equation*}
\frac{\theta_2}{\theta_1}=\theta_3\geq\left(\frac{p}{2}-pC_*^{\frac{2p}{p-2}}J(u_0)\right)^{\frac{1}{p-2}}
>\left(\frac{p}{2}-\frac{p-2}{2}\right)^{\frac{1}{p-2}}=1,
\end{equation*}
which gives \eqref{theta0}.
\end{proof}

\section{Main results}

\quad\quad In this section, the main results of this paper and their proofs will be stated.
The following theorem is mainly about the upper bounds on blowup rate and time as well as the exponential growth estimate of blowup solutions.

\begin{thm}\label{baopo}
Let $u_0\in H_0^1(\Omega)$. If $J(u_0)<d$ and $I(u_0)<0$, then $u$ blows up in the finite time $T$ in the sense of
\begin{equation}\label{bp}
\lim_{t\rightarrow T^-}\left(\int_\Omega\frac{u^2}{|x|^s}dx+\|\nabla u\|_2^2\right)=\infty.
\end{equation}
Furthermore,
\begin{description}
  \item{$(i)$} the upper bounds of blowup time and rate are given by
  \begin{equation}\label{uptime}
  T\leq\frac{1}{C_1(C_1-2)\mathcal{G}(0)}\left(\int_\Omega\frac{u_0^2}{|x|^s}dx+\|\nabla u_0\|_2^2\right),
  \end{equation}
  and
  \begin{equation}\label{uprate}
  \int_\Omega\frac{u^2}{|x|^s}dx+\|\nabla u\|_2^2\leq
               \frac{1}{2}\left[\frac{C_1(C_1-2)\mathcal{G}(0)}{\left(\int_\Omega\frac{u_0^2}{|x|^s}dx+\|\nabla u_0\|_2^2\right)^{\frac{C_1}{2}}}\right]^{\frac{2}{2-C_1}}(T-t)^{-\frac{2}{C_1-2}},
  \end{equation}
  where
   \begin{equation*}
  \mathcal{G}(0):=\left\{
               \begin{array}{ll}
               \ds  -J(u_0),&\hbox{~if~}J(u_0)<0; \\
               \vspace{-0.05in}\\
               \ds  d-J(u_0),&\hbox{~if~}0\leq J(u_0)<d
               \end{array}
             \right.
  \end{equation*}
  and
  \begin{equation}\label{CC1}
  C_1:=\left\{
               \begin{array}{ll}
               \ds  p,&\hbox{~if~}J(u_0)<0; \\
               \vspace{-0.05in}\\
               \ds  \frac{\left(\theta_0^{p}-1\right)(p-2)}{\theta_0^{p}}+2,&\hbox{~if~}0\leq J(u_0)<d.
               \end{array}
             \right.
  \end{equation}
  Here, $C_*$ and $\theta_0$ are given in \eqref{C*} and \eqref{theta0}, respectively.
  \item{$(ii)$} the exponential growth estimate of blowup solutions is given by
  \begin{equation*}
  \int_\Omega\frac{u^2}{|x|^s}dx+\|\nabla u\|_2^2\geq\left(\int_\Omega\frac{u_0^2}{|x|^s}dx+\|\nabla u_0\|_2^2\right)e^{C_2t},
  \end{equation*}
  where
  \begin{equation}\label{CC2}
  C_2:=\left\{
               \begin{array}{ll}
               \ds  \frac{p-2}{C_{**}+1},&\hbox{~~~if~}J(u_0)<0; \\
               \vspace{-0.05in}\\
               \ds  \frac{(p-2)\left(\theta_0^2-1\right)}{\theta_0^2(C_{**}+1)},&\hbox{~~~if~}0\leq J(u_0)<d.
               \end{array}
             \right.
  \end{equation}
  Here, $C_{**}$ and $\theta_0$ are given in \eqref{C**} and \eqref{theta0}, respectively.
\end{description}
\end{thm}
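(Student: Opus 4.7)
Let $F(t):=\int_\Omega u^2/|x|^s\,dx+\|\nabla u\|_2^2$. Lemma~\ref{le1} yields $F'(t)=-2I(u)$, and writing $\|u\|_p^p=\frac{p}{2}\|\nabla u\|_2^2-pJ(u)$ together with $J(u)\le J(u_0)$ gives $F'(t)\ge(p-2)\|\nabla u\|_2^2-2pJ(u_0)$. A standard continuation argument (crossing $I=0$ would force $u(t)\in\mathcal{N}$ and hence $J(u(t))\ge d>J(u_0)$, contradicting the monotonicity from Lemma~\ref{le1}) shows $I(u(t))<0$ for all $t\in[0,T)$, so $F$ is strictly increasing and Lemma~\ref{le2} applies throughout $[0,T)$.

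I would first prove part $(ii)$. When $J(u_0)<0$ the positive term $-2pJ(u_0)=2p\mathcal{G}(0)$ may be dropped, and \eqref{C**} gives $F'(t)\ge(p-2)\|\nabla u\|_2^2\ge\frac{p-2}{C_{**}+1}F(t)$; Gronwall produces the stated estimate with the first value of $C_2$ in \eqref{CC2}. When $0\le J(u_0)<d$, Lemma~\ref{le2} and \eqref{djingquezhi} give $\|\nabla u\|_2^2\ge\theta_0^2/C_*^{2p/(p-2)}=\frac{2p\theta_0^2d}{p-2}\ge\frac{2p\theta_0^2J(u_0)}{p-2}$, so $-2pJ(u_0)\ge-(p-2)\|\nabla u\|_2^2/\theta_0^2$, which yields $F'(t)\ge\frac{(p-2)(\theta_0^2-1)}{\theta_0^2}\|\nabla u\|_2^2\ge C_2F(t)$; Gronwall again concludes.

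For part $(i)$ the plan is to derive the super-linear differential inequality $F'(t)\ge\frac{2C_1\mathcal{G}(0)}{F(0)^{C_1/2}}F(t)^{C_1/2}$ and then separate variables to obtain $F(t)^{1-C_1/2}\le F(0)^{1-C_1/2}-\frac{C_1(C_1-2)\mathcal{G}(0)}{F(0)^{C_1/2}}\,t$, which gives \eqref{uptime}; integrating the same inequality on $[t,T)$ (using $F\to\infty$ at $T^-$) yields \eqref{uprate}. The inequality is obtained by testing the equation with $u_t$ to get $F''(t)=2(p-2)(\nabla u,\nabla u_t)+2p\bigl(\int_\Omega u_t^2/|x|^s\,dx+\|\nabla u_t\|_2^2\bigr)$, using the two-dimensional Cauchy--Schwarz bound $(F'(t))^2\le4F(t)\bigl(\int_\Omega u_t^2/|x|^s\,dx+\|\nabla u_t\|_2^2\bigr)$ that arises from $F'/2=(u_t/|x|^s,u)+(\nabla u_t,\nabla u)$, and combining them to show $FF''\ge(C_1/2)(F')^2$; this makes $F'/F^{C_1/2}$ non-decreasing, so it stays at least $F'(0)/F(0)^{C_1/2}\ge2C_1\mathcal{G}(0)/F(0)^{C_1/2}$. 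The initial bound $F'(0)\ge2C_1\mathcal{G}(0)$ is immediate when $J(u_0)<0$, and when $0\le J(u_0)<d$ it reduces via the lower bound $\|\nabla u_0\|_2^2\ge2p\theta_0^2d/(p-2)$ from Lemma~\ref{le2} to the algebraic inequality $pd(\theta_0^2-1)+(p-2)\mathcal{G}(0)/\theta_0^p\ge0$, which holds trivially.

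The main obstacle is the concavity-type bound $FF''\ge(C_1/2)(F')^2$. The dissipation part contributes $2p\bigl(\int_\Omega u_t^2/|x|^s\,dx+\|\nabla u_t\|_2^2\bigr)\ge(p/2)(F')^2/F$ via the Cauchy--Schwarz identity above, but the cross term $2(p-2)(\nabla u,\nabla u_t)=(p-2)(\|\nabla u\|_2^2)'$ in $F''$ has no a priori sign. The specific $C_1$ in \eqref{CC1} is tuned precisely to absorb the unfavorable portion of this cross term by Young's inequality, using either $\|\nabla u\|_2^2\le\frac{2}{p}\|u\|_p^p$ (when $J(u_0)<0$, permitting $C_1=p$) or the pointwise lower bound $\|u\|_p\ge\theta_2$ from Lemma~\ref{le2} (when $J(u_0)\ge0$, forcing the smaller $C_1=p-(p-2)/\theta_0^p$).
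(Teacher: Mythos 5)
Your part $(ii)$ and your inequality $\mathcal{H}'(t)=-I(u)\ge C_1\mathcal{G}(t)$ are essentially the paper's own arguments, and your final ODE inequality $F'\ge \frac{2C_1\mathcal{G}(0)}{F(0)^{C_1/2}}F^{C_1/2}$ is exactly the inequality the paper integrates. The gap is in how you propose to reach it in part $(i)$: the concavity bound $FF''\ge\frac{C_1}{2}(F')^2$ is precisely the step you have not proved, and the sketched fix (absorb the signless cross term $2(p-2)(\nabla u,\nabla u_t)$ in $F''=2(p-2)(\nabla u,\nabla u_t)+2pD$, $D:=\int_\Omega u_t^2/|x|^s dx+\|\nabla u_t\|_2^2$, by Young's inequality) cannot deliver the stated constants. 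Indeed, the only route from $D$ to $(F')^2$ is the Cauchy--Schwarz bound $(F')^2\le 4FD$, so the full coefficient $2p$ of $D$ is already needed, with no slack, to produce $\frac{p}{2}(F')^2/F$; in the case $J(u_0)<0$, where $C_1=p$, any Young absorption of the cross term both eats into that coefficient and leaves a term $-c\,F\|\nabla u\|_2^2$ of size comparable to $F^2$, which is not dominated by anything available pointwise in time. In the case $0\le J(u_0)<d$ there is some slack ($C_1<p$), but the leftover $-\frac{(p-2)\theta_0^{p}}{2}F\|\nabla u\|_2^2$ still cannot simply be dropped, so the claimed $C_1$ in \eqref{CC1} does not follow from the sketch as written. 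Since you describe this as the main obstacle rather than resolve it, part $(i)$ is incomplete.

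The paper avoids $F''$ (and hence the cross term) altogether: it differentiates the energy-type quantity $\mathcal{G}(t)$ rather than $\mathcal{H}'(t)$. Lemma~\ref{le1} gives $\mathcal{G}'(t)=\int_\Omega u_t^2/|x|^s dx+\|\nabla u_t\|_2^2=D(t)$, and Cauchy--Schwarz in the inner product $\langle v,w\rangle=\int_\Omega vw/|x|^s dx+(\nabla v,\nabla w)$ yields $\mathcal{H}\mathcal{G}'\ge\frac{1}{2}(\mathcal{H}')^2\ge\frac{C_1}{2}\mathcal{H}'\mathcal{G}$, i.e. $\mathcal{G}'/\mathcal{G}\ge\frac{C_1}{2}\mathcal{H}'/\mathcal{H}$. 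Integrating this gives $\mathcal{G}(t)\ge\mathcal{G}(0)\bigl(\mathcal{H}(t)/\mathcal{H}(0)\bigr)^{C_1/2}$, and feeding that back into $\mathcal{H}'\ge C_1\mathcal{G}$ produces your super-linear inequality for all $t$, not just via a monotonicity-of-$F'/F^{C_1/2}$ argument anchored at $t=0$. If you replace your second-derivative step with this first-order argument (which uses only Lemma~\ref{le1}, Lemma~\ref{le2}, and Cauchy--Schwarz), the rest of your part $(i)$ — separation of variables for \eqref{uptime} and integration on $[t,T)$ for \eqref{uprate} — goes through as you wrote it.
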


\begin{proof}
The proof is divided into two parts.

{\bf Part 1: Blowup and the upper bounds of blowup time and rate.}
For $t\in[0,T)$, we set
\begin{equation}\label{H}
\mathcal{H}(t)=\frac{1}{2}\left(\int_\Omega\frac{u^2}{|x|^s}dx+\|\nabla u\|_2^2\right)
\end{equation}
and
\begin{equation}\label{GG}
  \mathcal{G}(t)=\left\{
               \begin{array}{ll}
               \ds  -J(u),&\hbox{~if~}J(u_0)<0; \\
               \vspace{-0.05in}\\
               \ds  d-J(u),&\hbox{~if~}0\leq J(u_0)<d,
               \end{array}
             \right.
  \end{equation}
 If $J(u_0)<0$, then we obtain from Lemma \ref{le1} and \eqref{JI} that
\begin{equation}\label{th11}
0<\mathcal{G}(0)\leq \mathcal{G}(t)=-\frac{1}{2}\|\nabla u\|_2^2+\frac{1}{p}\|u\|_{p}^{p}\leq\frac{1}{p}\|u\|_{p}^{p}.
\end{equation}
In accordance with \eqref{H}, Lemma \ref{le1}, \eqref{JI}, \eqref{GG} and \eqref{th11}, one has
\begin{equation}\label{th12}
\mathcal{H}'(t)=-I(u)=\frac{p-2}{p}\|u\|_{p}^{p}-2J(u)\geq p\mathcal{G}(t).
\end{equation}
If $0\leq J(u_0)<d$, then we deduce from \eqref{GG}, Lemma \ref{le1}, \eqref{JI}, \eqref{djingquezhi}, \eqref{C*} and Lemma \ref{le2} that
\begin{equation}\label{th13}
\begin{split}
0<\mathcal{G}(0)\leq \mathcal{G}(t)&=d-\frac{1}{2}\|\nabla u\|_2^2+\frac{1}{p}\|u\|_{p}^{p}
\leq\frac{p-2}{2p}C_*^{-\frac{2p}{p-2}}-\frac{1}{2C_*^2}\|u\|_{p}^2+\frac{1}{p}\|u\|_{p}^{p}\\
&\leq\frac{p-2}{2p}C_*^{-\frac{2p}{p-2}}-\frac{\theta_1^2}{2C_*^2}+\frac{1}{p}\|u\|_{p}^{p}
=\frac{p-2}{2p}C_*^{-\frac{2p}{p-2}}-\frac{1}{2}C_*^{-\frac{2p}{p-2}}+\frac{1}{p}\|u\|_{p}^{p}
\leq\frac{1}{p}\|u\|_{p}^{p}.
\end{split}\end{equation}
Noting that \eqref{H}, Lemma \ref{le1}, \eqref{JI}, \eqref{GG}, \eqref{djingquezhi}, Lemma \ref{le2} and \eqref{th13}, we obtain
\begin{equation}\label{th14}
\begin{split}
\mathcal{H}'(t)=-I(u)&=\frac{p-2}{p}\|u\|_{p}^{p}-2J(u)=\frac{p-2}{p}\|u\|_{p}^{p}+2\mathcal{G}(t)-2d\\
&=\frac{p-2}{p}\|u\|_{p}^{p}+2\mathcal{G}(t)-\frac{p-2}{p}C_*^{-\frac{2p}{p-2}}
=\frac{p-2}{p}\|u\|_{p}^{p}+2\mathcal{G}(t)-\frac{p-2}{p}\left(\frac{\theta_1}{\theta_2}\right)^{p}\theta_2^{p}\\
&\geq\frac{p-2}{p}\|u\|_{p}^{p}+2\mathcal{G}(t)-\frac{p-2}{p\theta_0^{p}}\|u\|_{p}^{p}\geq \left[\frac{\left(\theta_0^{p}-1\right)(p-2)}{\theta_0^{p}}+2\right]\mathcal{G}(t).
\end{split}\end{equation}

Define $C_1$ as in \eqref{CC1}. From \eqref{th11}, \eqref{th12}, \eqref{th13} and \eqref{th14}, we arrive at
\begin{equation}\label{th15}
\mathcal{H}'(t)\geq C_1\mathcal{G}(t)>0.
\end{equation}
By Lemma \ref{le1}, one has
\begin{equation}\label{th16}
\mathcal{G}'(t)=\int_\Omega\frac{u_t^2}{|x|^s}dx+\|\nabla u_t\|_2^2,
\end{equation}
In accordance with \eqref{th16}, \eqref{H}, Cauchy-Schwarz's inequality and \eqref{th15}, we reach
\begin{equation*}\begin{split}
\mathcal{H}(t)\mathcal{G}'(t)&=\frac{1}{2}\left(\int_\Omega\frac{u^2}{|x|^s}dx+\|\nabla u\|_2^2\right)\left(\int_\Omega\frac{u_t^2}{|x|^s}dx+\|\nabla u_t\|_2^2\right)\\
&\geq\frac{1}{2}\left(\int_\Omega\frac{uu_t}{|x|^s}dx\right)^2+\frac{1}{2}\left(\int_\Omega\nabla u\nabla u_tdx\right)^2+\int_\Omega\frac{uu_t}{|x|^s}dx\int_\Omega\nabla u\nabla u_tdx\\
&=\frac{1}{2}(\mathcal{H}'(t))^2\geq\frac{C_1}{2}\mathcal{H}'(t)\mathcal{G}(t),
\end{split}\end{equation*}
which implies $\frac{\mathcal{G}'(t)}{\mathcal{G}(t)}\geq\frac{C_1\mathcal{H}'(t)}{2\mathcal{H}(t)}$. Integrating from $0$ to $t$ on both sides and making use of \eqref{th15}, we have
\begin{equation}\label{th18}
\frac{\mathcal{H}'(t)}{(\mathcal{H}(t))^{\frac{C_1}{2}}}\geq\frac{C_1\mathcal{G}(0)}{(\mathcal{H}(0))^{\frac{C_1}{2}}}.
\end{equation}
Integrating \eqref{th18} from $0$ to $t$ yields
\begin{equation}\label{th19}
(\mathcal{H}(t))^{\frac{2-C_1}{2}}\leq(\mathcal{H}(0))^{\frac{2-C_1}{2}}-\frac{C_1\mathcal{G}(0)(C_1-2)}{2(\mathcal{H}(0))^{\frac{C_1}{2}}}t.
\end{equation}
Owing to $C_1>2$, \eqref{th19} cannot hold for all $t\geq0$. Thus, $u$ blows up in the finite time $T$, namely,
$\lim\limits_{t\rightarrow T^-}\mathcal{H}(t)=\infty$, where
\begin{equation*}
T\leq\frac{2\mathcal{H}(0)}{C_1\mathcal{G}(0)(C_1-2)},
\end{equation*}
which gives \eqref{uptime}. Moreover, integrating \eqref{th18} from $t$ to $T$ and using $\lim\limits_{t\rightarrow T^-}\mathcal{H}(t)=\infty$, we reach
\begin{equation*}
\mathcal{H}(t)\leq\left[\frac{C_1\mathcal{G}(0)(C_1-2)}{2(\mathcal{H}(0))^{\frac{C_1}{2}}}\right]^{\frac{2}{2-C_1}}(T-t)^{-\frac{2}{C_1-2}},
\end{equation*}
which gives \eqref{uprate}.

{\bf Part 2: Exponential growth estimate.}
If $J(u_0)<0$, then by \eqref{H}, Lemma \ref{le1}, \eqref{JI} and \eqref{C**}, it is clear that
\begin{equation*}
\mathcal{H}'(t)=-I(u)\geq\frac{p-2}{2}\|\nabla u\|_2^2-pJ(u_0)>\frac{p-2}{2}\|\nabla u\|_2^2\geq\frac{p-2}{C_{**}+1}\mathcal{H}(t).
\end{equation*}
If $0\leq J(u_0)<d$, then we obtain from \eqref{H}, Lemma \ref{le1}, \eqref{JI}, \eqref{djingquezhi}, Lemma \ref{le2} and \eqref{C**} that
\begin{equation*}\begin{split}
\mathcal{H}'(t)&=-I(u)\geq\frac{p-2}{2}\|\nabla u\|_2^2-pJ(u_0)>\frac{p-2}{2}\|\nabla u\|_2^2-\frac{p-2}{2}C_*^{-\frac{2p}{p-2}}\\
&=\frac{p-2}{2}\left(\frac{\theta_1^2}{\theta_2^2}\|\nabla u\|_2^2+\frac{\theta_2^2-\theta_1^2}{\theta_2^2}\|\nabla u\|_2^2\right)
-\frac{p-2}{2C_*^2}\theta_1^2\geq\frac{(p-2)\left(\theta_0^2-1\right)}{2\theta_0^2}\|\nabla u\|_2^2\geq\frac{(p-2)\left(\theta_0^2-1\right)}{\theta_0^2(C_{**}+1)}\mathcal{H}(t).
\end{split}\end{equation*}
Then $\mathcal{H}'(t)>C_2\mathcal{H}(t)$, where $C_2$ is defined as in \eqref{CC2}. By \eqref{H}, one has
\begin{equation*}
\int_\Omega\frac{u^2}{|x|^s}dx+\|\nabla u\|_2^2\geq\left(\int_\Omega\frac{u_0^2}{|x|^s}dx+\|\nabla u_0\|_2^2\right)e^{C_2t}.
\end{equation*}
\end{proof}

\begin{remark}
\begin{description}
  \item{$(i)$} In the case of $J(u_0)<0$, the upper bound of blowup time in our theorem is equal to the one in \cite[Theorem 4.11]{MR4104462}.
  \item{$(ii)$} In the case of $0<J(u_0)<d$, we conclude that the upper bound of blowup time in our theorem is less than that in \cite[Theorem 4.13]{MR4104462} if $\left[\frac{(1-\varepsilon)p+2\varepsilon}{2}\right]^{-\frac{p}{p-2}}<\frac{p-1}{p-2}-\left[\frac{1}{(p-2)^2}+\frac{p}{4(p-1)}\right]^{\frac{1}{2}}$, where $\varepsilon\in\left[\frac{J(u_0)}{d},1\right)$.
\end{description}
\end{remark}

The next theorem is about the lower bounds of blowup time and rate.

\begin{thm}\label{xiajie}
Let $u_0\in H_0^1(\Omega)$. If the weak solution $u$ blows up in the finite time $T$ in the sense of \eqref{bp}, then
\begin{equation*}
T\geq\frac{1}{C_*^{p}(p-2)}\left(\int_\Omega\frac{u_0^2}{|x|^s}dx+\|\nabla u_0\|_2^2\right)^{\frac{2-p}{2}},
\end{equation*}
and
\begin{equation}\label{lorate}
\int_\Omega\frac{u^2}{|x|^s}dx+\|\nabla u\|_2^2\geq\frac{1}{C_*^{\frac{2p}{p-2}}(p-2)^{\frac{2}{p-2}}}(T-t)^{-\frac{2}{p-2}},
\end{equation}
where $C_*>0$ is given in \eqref{C*}.
\end{thm}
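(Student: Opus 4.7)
The natural strategy is to derive a differential inequality for the quantity
\[
F(t):=\int_\Omega\frac{u^2}{|x|^s}\,dx+\|\nabla u\|_2^2,
\]
which is precisely twice the $\mathcal{H}$ used in the proof of Theorem 3.1, and then solve it by separation of variables. This is the standard route to lower blowup-time bounds, and it requires no sign assumption on $J(u_0)$, only that blowup occurs.

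First, I would invoke Lemma \ref{le1} to write
\[
F'(t)=-2I(u)=2\|u\|_p^p-2\|\nabla u\|_2^2\le 2\|u\|_p^p.
\]
Then the Sobolev-type embedding \eqref{C*} gives $\|u\|_p^p\le C_*^p\|\nabla u\|_2^p$, and since $\|\nabla u\|_2^2\le F(t)$ I obtain the key differential inequality
\[
F'(t)\le 2C_*^p F(t)^{p/2},\qquad t\in[0,T).
\]
Because $p>2$, this is a super-linear ODI of Bernoulli type, and it is amenable to the usual separation-of-variables trick.

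Next I would rewrite the inequality as $F^{-p/2}F'\le 2C_*^p$ and integrate from $0$ to $t$, obtaining
\[
F(0)^{(2-p)/2}-F(t)^{(2-p)/2}\le C_*^p(p-2)\,t.
\]
Letting $t\to T^-$ and using the blowup hypothesis \eqref{bp} (which forces $F(t)^{(2-p)/2}\to 0$ since $p>2$), I arrive at
\[
F(0)^{(2-p)/2}\le C_*^p(p-2)\,T,
\]
which is precisely the claimed lower bound on $T$. For the pointwise lower blowup rate \eqref{lorate}, I would instead integrate the inequality $F^{-p/2}F'\le 2C_*^p$ from $t$ to $T$, again exploiting $F(T^-)=\infty$ to drop one endpoint, and solve the resulting inequality $F(t)^{(2-p)/2}\le C_*^p(p-2)(T-t)$ for $F(t)$.

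There is no genuine obstacle here: the argument is almost purely computational. The only subtle point is verifying that each manipulation is legitimate up to the blowup time, in particular justifying the passage $t\to T^-$ inside the integrated inequality; this is immediate from monotonicity of $F$ (guaranteed by $F'(t)\ge 0$ whenever $I(u)\le 0$, which holds near the blowup time since $F(t)\to\infty$) together with the definition of blowup in \eqref{bp}. Care is also needed to track the constants $C_*^p(p-2)$ versus $C_*^{2p/(p-2)}(p-2)^{2/(p-2)}$, which arise naturally after raising the inequality to the power $2/(p-2)$.
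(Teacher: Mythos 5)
Your proposal is correct and follows essentially the same route as the paper: the paper works with $\mathcal{H}(t)=\tfrac12 F(t)$, derives $\mathcal{H}'(t)=-I(u)\le C_*^p\|\nabla u\|_2^p\le 2^{p/2}C_*^p\mathcal{H}(t)^{p/2}$ via Lemma \ref{le1} and \eqref{C*}, and integrates once from $0$ to $t$ for the time bound and once from $t$ to $T$ for \eqref{lorate}, exactly as you do. (Your closing remark about needing monotonicity of $F$ to pass to the limit is superfluous — the hypothesis \eqref{bp} alone gives $F(t)^{(2-p)/2}\to 0$ as $t\to T^-$ — but this does not affect correctness.)
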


\begin{proof}
Let $\mathcal{H}$ be defined as in \eqref{H}. From the proof of Theorem \ref{baopo}, we have $\lim\limits_{t\rightarrow T^-}\mathcal{H}(t)=\infty$. According to Lemma \ref{le1}, \eqref{JI} and \eqref{C*}, we discover
\begin{equation*}
\mathcal{H}'(t)=-\|\nabla u\|_2^2+\|u\|_{p}^{p}\leq C_*^{p}\|\nabla u\|_2^{p}\leq 2^{\frac{p}{2}}C_*^{p}(\mathcal{H}(t))^{\frac{p}{2}},
\end{equation*}
which gives
\begin{equation}\label{th21}
\frac{\mathcal{H}'(t)}{(\mathcal{H}(t))^{\frac{p}{2}}}\leq2^{\frac{p}{2}}C_*^{p}.
\end{equation}
Integrating it from $0$ to $t$ gives
\begin{equation*}
\frac{2}{2-p}(\mathcal{H}(t))^{\frac{2-p}{2}}\leq\frac{2}{2-p}(\mathcal{H}(0))^{\frac{2-p}{2}}+2^{\frac{p}{2}}C_*^{p}t.
\end{equation*}
Letting $t\rightarrow T^-$ and using $\lim\limits_{t\rightarrow T^-}\mathcal{H}(t)=\infty$ yield the lower bound of blowup time
\begin{equation*}
T\geq\frac{2^{\frac{2-p}{2}}}{C_*^{p}(p-2)}(\mathcal{H}(0))^{\frac{2-p}{2}}=\frac{1}{C_*^{p}(p-2)}\left(\int_\Omega\frac{u_0^2}{|x|^s}dx+\|\nabla u_0\|_2^2\right)^{\frac{2-p}{2}}.
\end{equation*}
Integrating \eqref{th21} from $t$ to $T$ and taking $\lim\limits_{t\rightarrow T^-}\mathcal{H}(t)=\infty$ result in the lower bound of blowup rate
\begin{equation*}
\mathcal{H}(t)\geq\frac{1}{2C_*^{\frac{2p}{p-2}}(p-2)^{\frac{2}{p-2}}}(T-t)^{-\frac{2}{p-2}},
\end{equation*}
which means \eqref{lorate}.
\end{proof}



\end{document}